\documentclass[11pt,times,twoside]{article}
\pagestyle{myheadings}
\markboth
{J. Wen and Y. Shi  \ \ \ \ \ \ \ \ \ \ \ \ \ \ \ \ \ \ \ \ \ \ \ \ \ \ \ \ \ \ \ \ \ \ \ \ \ \ \ \ \ \ \ \ \  }
{\ \ \ \ \ \ \ \ \ \  \ Anticipative backward stochastic differential equations driven by fBm}

\setlength{\baselineskip}{1.6\baselineskip}
\usepackage[body={14.5cm,22cm}, left=2.5cm, right=2.5cm, top=3cm]{geometry}
\geometry{papersize={20.988cm,29.7cm}}

\usepackage{graphicx,amsmath,amsthm,latexsym,amssymb,amstext}
\usepackage[pdfstartview=FitH,
bookmarksnumbered=true,bookmarksopen=true,%
colorlinks=true,pdfborder=001,citecolor=blue,%
linkcolor=blue,urlcolor=blue]{hyperref}

\newtheorem{theorem}{Theorem}[section]

\newtheorem{proposition}[theorem]{Proposition}
\newtheorem{lemma}[theorem]{Lemma}

\newtheorem{remark}[theorem]{Remark}

\numberwithin{equation}{section}

\begin{document}

\title{\Large \bf Anticipative backward stochastic differential equations driven by fractional Brownian motion }

\author
{\textbf{Jiaqiang Wen}$^{1}$, \textbf{Yufeng Shi}$^{2,1,}$\thanks{Corresponding author. Emails: jqwen59@gmail.com (J. Wen), yfshi@sdu.edu.cn (Y. Shi)} \\
\normalsize{$^{1}$Institute for Financial Studies and School of Mathematics,}\\
\normalsize{Shandong University, Jinan 250100, China.}\\
\normalsize{$^{2}$School of Statistics, Shandong University of Finance and Economics, Jinan 250014, China}\\
}

\date{}

\renewcommand{\thefootnote}{\fnsymbol{footnote}}

\footnotetext[0]{The first and second author are supported  by National Natural Science Foundation of China (Grant Nos. 11371226 and 11231005),
Foundation for Innovative Research Groups of National Natural Science Foundation of China (Grant  No. 11221061), the 111 Project (Grant No. B12023).}

\maketitle

\begin{abstract}
We study the anticipative backward stochastic differential equations (BSDEs, for short) driven by
 fractional Brownian motion with Hurst parameter H greater than 1/2.
The stochastic integral used throughout the paper is the divergence operator type integral.
We obtain the existence and uniqueness theorem to these equations under the Lipschitz condition.
A comparison theorem for this type of anticipative BSDE is also established.
\end{abstract}

\textbf{Keywords}: Anticipative backward stochastic differential equation, Fractional Brownian motion, Comparison theorem.

\textbf{2010 Mathematics Subject Classification}: 60H10, 60H20, 60G22.

\section{Introduction}

Fractional Brownian motion (fBm, for short) with Hurst parameter $H \in (0, 1)$ is a zero mean Gaussian process $B^{H}=\{ B^{H}_{t},t\geq 0 \}$
whose covariance is given by
\begin{equation*}
  \mathbb{E}(B^{H}_{t}B^{H}_{s}) = \frac{1}{2} (t^{2H} + s^{2H} - |t-s|^{2H}).
\end{equation*}
For $H=\frac{1}{2}$, the process $B^{H}$ is a classical Brownian motion.
In the case $H > \frac{1}{2}$, the process $B^{H}$ exhibits long range dependence.
These properties make this process a useful driving noise in models arising in  finance, physics, telecommunication networks and other fields.
However, since $B^{H}$ with $H> \frac{1}{2}$ is not a semimartingale, we cannot use the classical theory of stochastic calculus to define
the fractional stochastic integral.
Essentially, two different types of integrals with respect to fBm have been defined and developed.
The first one is the pathwise Riemann-Stieltjes integral which exists if the integrand has a continuous paths of order
 $\alpha >1 - H$ (see Young \cite{Young}).
This type of integral has the properties of Stratonovich integral, which can lead to difficulties in applications.
The second one, introduced by Decreusefond-\"{U}st\"{u}nel \cite{Decreusefond}, is the divergence operator (Skorohod
integral), defined as the adjoint of the derivative operator in the framework of Malliavin calculus.
Since this stochastic integral satisfies the zero mean property and it can be expressed as the limit of Riemann sums by using Wick products,
it was later developed by many authors. We refer to the works of Biagini-Hu-{\O}ksendal-Zhang \cite{Biagini} and Nualart \cite{Nualart}.

Backward stochastic differential equations (BSDEs in short) driven by Brownian motion were introduced by Bismut \cite{Bismut}
for the linear case and by Pardoux-Peng \cite{Peng} in the general case.
Since then, these pioneer works are extensively used in many fields like mathematical finance \cite{Peng2}, stochastic optimal control and stochastic games \cite{Hama}.
Recently, Peng-Yang \cite{Yang} introduced a new type of BSDEs, called anticipative BSDEs, which can be regarded as a new duality type of stochastic differential delay equations.
BSDEs driven by fBm were firstly studied by Hu \cite{Hu3} and Hu-Peng \cite{Hu},
where they obtained the existence and uniqueness of the solution.
Then Hu-Ocone-Song \cite{Hu1} established a comparison result for fractional BSDEs.
Some other recent developments of fractional BSDEs  can be found in
Bender \cite{Bender}, Borkowska \cite{Borkowska} and Jing \cite{Jing}, etc.

Motivated by the above works, the purpose of this paper is to study the anticipative BSDEs driven by fBm with Hurst parameter $H > \frac{1}{2}$.
Under the Lipschitz condition, we prove this type of equation admits a unique solution.
As a fundamental tool, the comparison theorem plays an important role in the theory and applications of BSDEs.
We also establish a comparison theorem for this class of anticipative BSDEs.

This paper is organized as follows. In Section 2, we provide some basic results on fractional Brownian motions.
Section 3 contains the definition of anticipative BSDEs with respect to the fBm.
The existence and uniqueness result is proved here.
We give a comparison theorem for the solutions of anticipative BSDEs in Section 4.

\section{Fractional calculus}

Let $(\Omega,\mathcal{F}, P, \mathcal{F}_{t}, t\geq 0)$ be a complete stochastic basis such that $\mathcal{F}_{0}$ contains all $P$-null elements
 of $\mathcal{F}$ and suppose that the filtration is generated by a fractional Brownian motion $B^{H}=\{ B^{H}_{t},t\geq 0 \}$.
We assume $H > \frac{1}{2}$ throughout this paper.
Denote $\phi(x) = H(2H - 1)|x|^{2H-2}, \ x \in \mathbb{R}$.
Let $\xi$ and $\eta$ be two continuous functions on $[0,T]$.
We define
\begin{equation*}
  \langle \xi,\eta \rangle_{t} = \int_0^t \int_0^t \phi(u-v) \xi_{u} \eta_{v} dudv,
\end{equation*}
and $ \| \xi \|_{t}^{2} =  \langle \xi,\xi  \rangle_{t}$.
Note that, for any $t\in [0,T], \ \langle \xi,\eta \rangle_{t}$ is a Hilbert scalar product.
Let $\mathcal{H}$ be the completion of the continuous functions under this Hilbert norm.
The elements of $\mathcal{H}$ may be distributions.

We denote by $\mathcal{P}_{T}$ the set of all polynomials of fractional Brownian motion in $[0,T]$, i.e., it contains all elements of the form
\begin{equation*}
  F(\omega) = f \left(\int_0^T \xi_{1}(t) dB_{t}^{H},...,\int_0^T \xi_{n}(t) dB_{t}^{H} \right),
\end{equation*}
where $f$ is a polynomial function of $n$ variables.
The Malliavin derivative operator $D_{s}^{H}$ of an element $F\in \mathcal{P}_{T}$ is defined as follows:
\begin{equation*}
  D_{s}^{H}F = \sum\limits_{i=1}^{n} \frac{\partial f}{\partial x_{i}}
               \left(\int_0^T \xi_{1}(t) dB_{t}^{H},...,\int_0^T \xi_{n}(t) dB_{t}^{H} \right)\xi_{i}(s), \ \ s\in [0,T].
\end{equation*}
Since the divergence operator
  $D^{H}:L^{2}(\Omega,\mathcal{F}, P) \rightarrow (\Omega,\mathcal{F}, \mathcal{H})$ is closable,
we can consider the space $\mathbb{D}^{1,2}$ be the completion of $\mathcal{P}_{T}$ with the norm
\begin{equation*}
  \| F \|^{2}_{1,2} = \mathbb{E}|F|^{2} + \mathbb{E}\|D^{H}_{s} F\|^{2}_{T}.
\end{equation*}
We also introduce another derivative
\begin{equation*}
  \mathbb{D}_{t}^{H}F = \int_0^T \phi(t-s) D_{s}^{H}F ds.
\end{equation*}
Denote by $\mathbb{L}^{1,2}_{H}$ the space of all stochastic processes $F : (\Omega,\mathcal{F}, P) \rightarrow \mathcal{H}$ such that
\begin{equation*}
  \mathbb{E} \left( \| F \|_{T}^{2} + \int_0^T \int_0^T |\mathbb{D}_{s}^{H}F_{t}|^{2} dsdt \right) < \infty.
\end{equation*}

The following results are well know now (see \cite{Hu2,Hu3,Hu4}).

\begin{proposition}\label{2}
Let $F \in \mathbb{L}^{1,2}_{H}$, then the It\^{o}-Skorohod type stochastic integral
$\int_0^T F_{s} dB_{s}^{H}$ exists in $L^{2}(\Omega,\mathcal{F}, P)$. Moreover, we have
\begin{equation*}
 \mathbb{E} \left( \int_0^T F_{s} dB_{s}^{H} \right) = 0,
\end{equation*}
and
\begin{equation*}
   \mathbb{E} \left( \int_0^T F_{s} dB_{s}^{H} \right)^{2}
 = \mathbb{E} \left( \| F \|_{T}^{2} + \int_0^T \int_0^T \mathbb{D}_{s}^{H}F_{t} \mathbb{D}_{t}^{H}F_{s} dsdt \right).
\end{equation*}
\end{proposition}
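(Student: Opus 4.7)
The plan is to establish both identities first on a dense class of elementary processes and then pass to the limit in $\mathbb{L}^{1,2}_H$. For simple processes of the form $u_t = \sum_{j=1}^n F_j\, h_j(t)$, with $F_j \in \mathcal{P}_T$ and $h_j$ continuous on $[0,T]$, I would take as the working definition of the divergence integral
$$\int_0^T u_s\, dB_s^H := \sum_{j=1}^n \left[F_j \int_0^T h_j(t)\, dB_t^H - \langle D^H F_j, h_j\rangle_{\mathcal{H}}\right],$$
which is the standard Skorohod formula for fBm. This class is dense in $\mathbb{L}^{1,2}_H$ by construction, since $\mathbb{D}^{1,2}$ was defined as the completion of $\mathcal{P}_T$.

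For the zero-mean identity, I would invoke the duality relation characteristic of the fBm Skorohod integral, $\mathbb{E}\bigl[G\,\int_0^T u_s dB_s^H\bigr] = \mathbb{E}[\langle u, D^H G\rangle_{\mathcal{H}}]$ for $G \in \mathbb{D}^{1,2}$. On simple $u$ this reduces to the Gaussian integration-by-parts relation $\mathbb{E}[G\, B^H(h)] = \mathbb{E}[\langle D^H G, h\rangle_{\mathcal{H}}]$, which is verified term by term against the explicit formula above; taking $G \equiv 1$ then yields the vanishing of the mean. For the second-moment identity, I would apply the same duality with $G = \int_0^T u_s dB_s^H$ itself, so that
$$\mathbb{E}\left[\left(\int_0^T u_s dB_s^H\right)^{\!2}\right] = \mathbb{E}\bigl[\langle u, D^H(\textstyle\int_0^T u_s dB_s^H)\rangle_{\mathcal{H}}\bigr].$$
Differentiating the explicit formula for simple $u$ produces a diagonal contribution that collapses into $\mathbb{E}\|u\|_T^2$ after unfolding the $\mathcal{H}$-inner product, together with a cross contribution involving a double Malliavin derivative $D^H_s D^H_r F_j$. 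Absorbing the two factors of the covariance kernel $\phi$ — one from the $\mathcal{H}$-inner product and one from the definition $\mathbb{D}_t^H F = \int_0^T \phi(t-s) D_s^H F\, ds$ — reassembles the cross term into the required $\mathbb{E}\int_0^T\int_0^T \mathbb{D}_s^H u_t\,\mathbb{D}_t^H u_s\, dsdt$.

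The existence of the integral for arbitrary $F \in \mathbb{L}^{1,2}_H$ then follows by approximation: choose simple $u^{(n)} \to F$ in the $\mathbb{L}^{1,2}_H$ norm and observe that the isometry applied to $u^{(n)} - u^{(m)}$ forces the corresponding integrals to be Cauchy in $L^2(\Omega)$. This requires checking that the $\mathbb{L}^{1,2}_H$ norm dominates the right-hand side of the isometry; the trace term is sign-indefinite but is majorised by $\mathbb{E}\int_0^T\int_0^T |\mathbb{D}_s^H F_t|^2\, dsdt$ via Cauchy-Schwarz combined with the symmetry $s \leftrightarrow t$. Both identities then extend to the limit by $L^2$-continuity.

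The main obstacle is the combinatorial bookkeeping in the second-moment calculation: the factor of $\phi$ embedded in $\mathbb{D}^H$ and the one implicit in $\langle\cdot,\cdot\rangle_{\mathcal{H}}$ must be paired with the two Malliavin differentiations in precisely the right order to produce exactly $\mathbb{D}_s^H u_t\,\mathbb{D}_t^H u_s$ rather than, say, $|\mathbb{D}_s^H u_t|^2$ or an expression involving an iterated convolution of $\phi$ with itself. Once this contraction is identified correctly, the density argument and the zero-mean computation are routine applications of the duality formula.
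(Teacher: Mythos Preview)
The paper does not prove this proposition at all: it is stated as a known result with the sentence ``The following results are well known now (see \cite{Hu2,Hu3,Hu4})'' and no argument is given. Your proposal therefore goes well beyond what the paper does, supplying an actual proof sketch where the paper simply cites the literature.

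Your outline is the standard one from those references (Duncan--Hu--Pasik-Duncan, Hu, Hu--{\O}ksendal): define the divergence on cylindrical/simple integrands, verify the mean-zero and isometry formulas there via the Gaussian integration-by-parts (duality) relation, and extend by density using that the $\mathbb{L}^{1,2}_H$ norm controls the right-hand side of the isometry. The identification of the cross term as $\mathbb{E}\int_0^T\int_0^T \mathbb{D}^H_s F_t\,\mathbb{D}^H_t F_s\,ds\,dt$ rather than a square is exactly the point you flag, and your description of how the two copies of $\phi$ get absorbed is correct. The only minor remark is that the density of simple processes in $\mathbb{L}^{1,2}_H$ is not quite ``by construction'' from the definition of $\mathbb{D}^{1,2}$ alone; one also needs an approximation in the time variable (e.g.\ step functions or smooth functions dense in $\mathcal{H}$), but this is routine and is handled in the cited references.
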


\begin{proposition}
Let $f, g: [0, T] \rightarrow \mathbb{R}$ be deterministic continuous functions. If
\begin{equation*}
  X_{t} = X_{0} + \int_0^t g_{s} ds + \int_0^t f_{s} dB_{s}^{H}, \ \ t\in [0,T],
\end{equation*}
where $X_{0}$ is a constant and $F \in C^{1,2}([0, T ] \times \mathbb{R})$, then for any $t\in [0,T]$,
\begin{equation*}
 F(t,X_{t})= F(0,X_{0})+ \int_0^t \frac{\partial F}{\partial s}(s,X_{s}) ds + \int_0^t \frac{\partial F}{\partial x}(s,X_{s}) dX_{s}
             + \frac{1}{2}\int_0^t \frac{\partial^{2} F}{\partial x^{2}}(s,X_{s}) \bigg[\frac{d}{ds} \| f \|_{s}^{2}\bigg] ds.
\end{equation*}
\end{proposition}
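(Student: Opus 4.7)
The plan is to proceed by density: verify the identity on a dense class of $F$ by direct computation, and then extend to general $F \in C^{1,2}$ by approximation. A convenient dense class is the exponentials $F(t,x) = e^{\lambda x}$ for $\lambda \in \mathbb{R}$. Setting $Y_t := \int_0^t f_s dB_s^H$, the Wick exponential $\mathcal{E}_t(\lambda) := \exp(\lambda Y_t - \tfrac{\lambda^2}{2}\|f\|_t^2)$ is known to satisfy
\begin{equation*}
\mathcal{E}_t(\lambda) = 1 + \int_0^t \lambda f_s \mathcal{E}_s(\lambda)\, dB_s^H
\end{equation*}
as a standard identity in the Malliavin calculus for fBm (see \cite{Hu2, Hu3}). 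Combining this with the deterministic chain rule to incorporate the drift $\int g\, ds$ produces the claimed Itô formula for $F(t,x) = e^{\lambda x}$, with the quadratic correction $\tfrac{1}{2}\partial_{xx} F \tfrac{d}{ds}\|f\|_s^2$ arising from the time-derivative of $\|f\|_t^2$ in the exponent of $\mathcal{E}_t(\lambda)$. Equivalently, one may work with monomials $x^n$ via their Hermite-polynomial representation and the Wick-Skorohod chain rule $d Y_t^{\diamond n} = n Y_t^{\diamond (n-1)} f_t\, dB_t^H$.

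To extend to arbitrary $F \in C^{1,2}$, approximate by polynomials $F_n$ so that $F_n,\partial_t F_n,\partial_x F_n,\partial_{xx} F_n$ all converge to their $F$-counterparts uniformly on compact subsets of $[0,T]\times\mathbb{R}$. Each of the four terms in the Itô formula for $F_n$ then converges to the corresponding term for $F$: the two Lebesgue integrals by dominated convergence using the Gaussian tail of $X_s$, and the Skorohod integral $\int_0^t \partial_x F_n(s,X_s) f_s\, dB_s^H$ in $L^2(\Omega)$ via the isometry of Proposition \ref{2} together with closability of the divergence operator. The Malliavin derivatives required by the isometry are explicit: since $X_s$ is a Gaussian functional with $D_r^H X_s = f_r \mathbf{1}_{[0,s]}(r)$, the chain rule yields $\mathbb{D}_r^H \partial_x F_n(s,X_s) = \partial_{xx} F_n(s,X_s)\int_0^s \phi(r-u) f_u\, du$, and uniform convergence of $\partial_{xx} F_n$ combined with integrability of $\phi$ on $[0,T]$ (which requires $H>\tfrac12$) provides the dominating bound.

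The main obstacle will be establishing the Skorohod identity for the Wick exponential (or, equivalently, the polynomial chain rule) with the correct coefficient $\tfrac{1}{2}$ in front of the $\tfrac{d}{ds}\|f\|_s^2$ term; once this algebraic/Malliavin calculation is in place, the density step is routine. A naive partition-Riemann-sum approach in the spirit of the classical Itô formula is misleading here, because $(\Delta X_i)^2$ is of order $(\Delta t_i)^{2H}$ rather than $\Delta t_i$: the correct quadratic-variation-like correction does not come from pathwise squared increments but from Wick-product/Malliavin identities. This is essentially the route followed by Duncan--Hu--Pasik-Duncan \cite{Hu2} and Hu--Peng \cite{Hu3}, both cited above.
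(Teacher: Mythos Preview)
The paper does not prove this proposition at all: it is stated as one of several ``well known'' results and deferred to the references \cite{Hu2,Hu3,Hu4}. Your sketch --- verify the formula on Wick exponentials (equivalently Hermite polynomials) via the Skorohod chain rule, then pass to general $F\in C^{1,2}$ by approximation and the $L^2$-isometry for the divergence integral --- is precisely the route taken in those sources (in particular Duncan--Hu--Pasik-Duncan \cite{Hu2}), so there is nothing to compare beyond noting that you have supplied what the paper outsources. One small slip: your final citation ``Hu--Peng \cite{Hu3}'' is mislabeled; \cite{Hu3} is Hu's 2005 Memoir, while Hu--Peng is \cite{Hu}.
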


\begin{proposition}\label{224}
Let $f_{i}(s), g_{i}(s)$ be in $\mathbb{D}^{1,2}$ and $\mathbb{E} \int_0^T (|f_{i}(s)|^{2} + |g_{i}(s)|^{2}) ds < \infty$, where $i = 1, 2$.
Assume that $D^{H}_{t}f_{1}(s)$ and $D^{H}_{t}f_{2}(s)$ are continuously differentiable with respect to
 $(s,t)\in [0,T]^{2}$ for almost all $\omega \in \Omega$.
Suppose also that $\mathbb{E} \int_0^T \int_0^T |\mathbb{D}_{t}^{H}f_{i}(s)|^{2} dsdt < \infty$.
Denote
\begin{equation*}
  X_{i}(t) = \int_0^t g_{i}(s) ds + \int_0^t f_{i}(s) dB_{s}^{H}, \ \ t\in [0,T].
\end{equation*}
Then
\begin{equation*}
\begin{split}
   X_{1}(t)X_{2}(t) =& \int_0^t X_{1}(s)g_{2}(s) ds + \int_0^t X_{1}(s)f_{2}(s) dB_{s}^{H}
                      +\int_0^t X_{2}(s)g_{1}(s) ds \\
                     &+ \int_0^t X_{2}(s)f_{1}(s) dB_{s}^{H}
                      +\int_0^t \mathbb{D}_{s}^{H}X_{1}(s)g_{2}(s) ds + \int_0^t \mathbb{D}_{s}^{H}X_{2}(s)g_{1}(s) ds.
\end{split}
\end{equation*}
\end{proposition}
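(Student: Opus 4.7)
The plan is to derive the product formula by applying a two-dimensional It\^o-type formula to the pair $(X_1, X_2)$ with $F(x_1,x_2) = x_1 x_2$ and then reading off the individual terms. Since Proposition 2.2 is stated only for deterministic integrands, the first step is to invoke its extension to random integrands $f, g \in \mathbb{L}^{1,2}_H$, in which the quadratic-variation correction $\frac{1}{2}\partial_x^2 F \cdot \frac{d}{ds}\|f\|_s^2$ of Proposition 2.2 is replaced by the diagonal Malliavin-derivative trace $\partial_x^2 F(s, X_s) \, \mathbb{D}^H_s X_s$. The hypotheses in the statement (joint continuous differentiability of $D^H_t f_i(s)$ in $(s,t)$ and square integrability of $\mathbb{D}^H_t f_i(s)$) are precisely what is needed to make the restriction of $\mathbb{D}^H_s X_i(\cdot)$ to the diagonal a well-defined, square-integrable object.

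For the bilinear choice $F(x_1, x_2) = x_1 x_2$ the two first partials are $x_2$ and $x_1$, and the only nonvanishing second partial is $\partial^2 F/(\partial x_1 \partial x_2) = 1$. The extended It\^o formula then yields
\begin{equation*}
X_1(t)X_2(t) = \int_0^t X_2(s)\,dX_1(s) + \int_0^t X_1(s)\,dX_2(s) + \text{(Malliavin correction)},
\end{equation*}
and substituting $dX_i = g_i\,ds + f_i\,dB^H_s$ into the first two integrals produces the four terms $\int_0^t X_i(s) g_j(s)\,ds$ and $\int_0^t X_i(s) f_j(s)\,dB^H_s$ that appear in the statement. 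What remains is to identify the cross-derivative contribution with the last two terms $\int_0^t \mathbb{D}^H_s X_1(s) g_2(s)\,ds + \int_0^t \mathbb{D}^H_s X_2(s) g_1(s)\,ds$, which reduces to an explicit computation of $\mathbb{D}^H_s X_i(s)$ using the definition $\mathbb{D}^H_t F = \int_0^T \phi(t-s) D^H_s F \, ds$ together with the commutation of $D^H$ with the Lebesgue and Skorohod integrals defining $X_i$.

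The main obstacle is rigorously justifying the extended It\^o formula under the random-coefficient regime. The standard route is to first establish the identity for elementary processes of the form $f_i(s) = \sum_j F^i_j \mathbf{1}_{[s_j, s_{j+1}]}(s)$ with $F^i_j \in \mathcal{P}_T$, where the computation collapses to a finite-dimensional Wick-product identity and can be verified using the duality/isometry content of Proposition 2.1. One then passes to the limit by density of such step processes in $\mathbb{L}^{1,2}_H$, using the second-moment bound of Proposition 2.1 to control the stochastic integrals and the continuous-differentiability assumption on $D^H_t f_i(s)$ to secure convergence of the diagonal trace terms. Once the extended It\^o formula is granted, the remainder of the proof is a bookkeeping exercise matching terms on the two sides of the identity.
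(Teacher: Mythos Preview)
The paper does not supply a proof of this proposition; it is listed among the results that ``are well known now'' and attributed to the references \cite{Hu2,Hu3,Hu4}. There is therefore no argument in the paper to compare your sketch against. Your outline---verify the identity first for elementary step processes with coefficients in $\mathcal{P}_T$ via a finite Wick computation, then close up in $\mathbb{L}^{1,2}_H$ using the isometry of Proposition~\ref{2}---is essentially the route taken in that literature.

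One remark on internal consistency. When you carry out the extended It\^o formula for $F(x_1,x_2)=x_1x_2$, the Malliavin correction appears from commuting the Skorohod integral past the random factors $X_i$; the resulting cross terms are
\[
\int_0^t \mathbb{D}_s^{H}X_1(s)\,f_2(s)\,ds \;+\; \int_0^t \mathbb{D}_s^{H}X_2(s)\,f_1(s)\,ds,
\]
i.e.\ they involve the stochastic integrands $f_j$, not the drift integrands $g_j$ printed in the displayed statement. The version appearing here is a typographical slip (compare the original formulation in Hu--Peng \cite{Hu} or Maticiuc--Nie \cite{Maticiuc}, and note how the formula is actually used later in the paper, e.g.\ in the derivation of \eqref{36}). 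Your method, carried through, reproduces the correct formula rather than the one stated.
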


\section{Anticipative BSDEs}

In this section, we study the anticipative BSDEs driven by fBm.
The existence and uniqueness theorem  is proved here.
In the following, let
\begin{equation*}
\eta_{t} = \eta_{0} + \int_0^t b_{s} ds + \int_0^t \sigma_{s} dB_{s}^{H},
\end{equation*}
where $\eta_{0}$ is a constant, $b$ and $\sigma$ are two deterministic differentiable functions,
such that $\sigma_{t} \neq 0$ (then either $\sigma_{t} < 0$ or $\sigma_{t} > 0), \ t\in [0,T]$.
Note that, since
\begin{equation*}
  \| \sigma \|_{t}^{2} = H(2H-1) \int_0^t \int_0^t |u-v|^{2H-2} \sigma_{u} \sigma_{v} dudv,
\end{equation*}
we have $\frac{d}{dt}(\| \sigma \|_{t}^{2})= 2 \hat{\sigma}_{t} \sigma_{t}>0$ for $t\in (0,T]$, where
$\hat{\sigma}_{t} = \int_0^t \phi(t-v) \sigma_{v} dv$.

Motivated by Hu-Peng \cite{Hu}, we now introduce the anticipative BSDEs driven by fBm as follows:
\begin{equation}\label{31}
  \begin{cases}
    -dY_{t}=f(t,\eta_{t},Y_{t},Z_{t},Y_{t+\delta(t)},Z_{t+\zeta(t)}) dt - Z_{t} dB_{t}^{H}, \ \ \ t\in [0,T]; \\
      Y_{t}=g(\eta_{t}), \ \  Z_{t}=h(\eta_{t}), \ \ \ \ \ \ \ \ \ \ \ \ \ \ \ \ \ \ \ \ \ \ \ \ \ \ \ \ \ \  t\in[T,T+K],
  \end{cases}
\end{equation}
where $\delta(\cdot)$ and $\zeta(\cdot)$ are two deterministic $\mathbb{R}^{+}$-valued continuous functions defined on $[0,T]$ such that:
\begin{itemize}
  \item [(i)] There exists a constant $K\geq 0$ such that, for all $t\in[0,T]$,
    \begin{equation*}
      t + \delta(t) \leq T+K; \ \ \ \ t + \zeta(t) \leq T+K.
    \end{equation*}
  \item [(ii)] There exists a constant $L \geq 0$ such that, for all $t \in [0,T]$ and for all nonnegative and integrable $m(\cdot)$,
   \begin{equation*}
     \int_t^T m(s + \delta(s)) ds \leq L \int_t^{T+K} m(s) ds; \ \ \
     \int_t^T m(s + \zeta(s)) ds \leq L \int_t^{T+K} m(s) ds.
   \end{equation*}
\end{itemize}
We introduce the following sets:
\begin{itemize}
  \item [$\bullet$] $C^{1,3}_{pol}([0,T] \times \mathbb{R})$
        is the space of all $C^{1,3}$-functions over $[0, T] \times \mathbb{R}$, which together with their derivatives are of polynomial growth;
  \item [$\bullet$] $L^{2}(\mathcal{F}_{r};\mathbb{R}) =\{\xi:\Omega\rightarrow \mathbb{R} \mid \xi$
        is $\mathcal{F}_{r}$-measurable, $\mathbb{E}[|\xi|^{2}]< \infty\}$;
  \item [$\bullet$] $L_{\mathcal{F}}^2(0,T;\mathbb{R})
  =\{X:\Omega\times [0,T]\rightarrow \mathbb{R} \mid X(\cdot)$ is $\mathcal{F}$-adapted,
  $\mathbb{E}\int_0^T |X(t)|^{2} dt< \infty\}$;
\end{itemize}
%
%$\bullet$ $C^{1,3}_{pol}([0,T] \times \mathbb{R})$
% is the space of all $C^{1,3}$-functions over $[0, T] \times \mathbb{R}$, which together with
%their derivatives are of polynomial growth;\\
% $\bullet$ $L^{2}(\mathcal{F}_{r};\mathbb{R})
%  =\{\xi:\Omega\rightarrow \mathbb{R} \mid \xi$ is $\mathcal{F}_{r}$-measurable, $\mathbb{E}[|\xi|^{2}]< \infty\}$;\\
%$\bullet$ $L_{\mathcal{F}}^2(0,T;\mathbb{R})
%  =\{X:\Omega\times [0,T]\rightarrow \mathbb{R} \mid X(\cdot)$ is $\mathcal{F}$-adapted,
%  $\mathbb{E}\int_0^T |X(t)|^{2} dt< \infty\}$;\\
 $\mathcal{V}_{[0,T]} := \bigg \{ Y=\varphi\big(\cdot,\eta(\cdot)\big) | \varphi \in C_{pol}^{1,3}([0,T]\times \mathbb{R})
  \  with \ \frac{\partial \varphi}{\partial t}  \in C_{pol}^{0,1}([0,T]\times \mathbb{R}), t\in[0,T] \bigg \},$\\
and by $\widetilde{\mathcal{V}}_{[0,T+K]}$ and $\widetilde{\mathcal{V}}_{[0,T+K]}^{H}$  denote the completion of $\mathcal{V}_{[0,T+K]}$ under the following norm respectively,
\begin{equation*}
  \| Y \| := \bigg(\int_0^{T+K}  e^{\beta t} \mathbb{E} |Y(t)|^{2} dt\bigg)^{\frac{1}{2}}, \ \ \ \
  \| Z \| := \bigg(\int_0^{T+K} t^{2H-1} e^{\beta t} \mathbb{E} |Z(t)|^{2} dt\bigg)^{\frac{1}{2}}.
\end{equation*}
It's easy to know  $\widetilde{\mathcal{V}}_{[0,T+K]}^{H} \subseteq \widetilde{\mathcal{V}}_{[0,T+K]} \subseteq L^{2}_{\mathcal{F}}(0,T+K;\mathbb{R})$.

The setting of our problem is as follows: to find a pair of processes
 $(Y_{\cdot},Z_{\cdot}) \in \widetilde{\mathcal{V}}_{[0,T+K]} \times \widetilde{\mathcal{V}}^{H}_{[0,T+K]}$ satisfying the anticipative BSDE (\ref{31}).

\begin{itemize}
\item[(H1)] $g$ and $h$ are given elements in $ C^{2}_{pol}(\mathbb{R})$ such that
\begin{equation*}
\mathbb{E}\int_T^{T+K} e^{\beta t}|g(\eta_{t})|^{2} dt<0; \ \
  \mathbb{E}\int_T^{T+K} e^{\beta t}t^{2H-1}|h(\eta_{t})|^{2} dt<0.
\end{equation*}
\end{itemize}
The driver $f(t,x,y,z,\theta,\zeta): [0,T]\times\mathbb{R}^{3}\times L^{2}(\mathcal{F}_{r'},\mathbb{R}) \times L^{2}(\mathcal{F}_{r},\mathbb{R})
                 \longrightarrow L^{2}(\mathcal{F}_{t},\mathbb{R})$ is a $C_{pol}^{0,1}$-continuous function, where $r', r \in[t,T+K]$
and $f$ satisfies the following condition:
\begin{itemize}
\item[(H2)]
 There exists a constant $C\geq 0$, such that for all
$t\in [0,T], x,y,y',z,z' \in \mathbb{R},$ $\theta_{\cdot},\theta'_{\cdot},\zeta_{\cdot},\zeta'_{\cdot} \in L_{\mathcal{F}}^2(t,T+K;\mathbb{R})$, we have
\begin{equation*}
 |f(t,x,y,z,\theta_{r'},\zeta_{r}) - f(t,x,y',z',\theta'_{r'},\zeta'_{r})|
       \leq C\big(|y-y'| +|z-z'| + \mathbb{E}^{\mathcal{F}_{t}}[|\xi_{r'}-\xi'_{r'}|+|\zeta_{r}-\zeta'_{r}|]\big).
\end{equation*}
%\begin{equation*}
%     and \  \mathbb{E}\int_0^T e^{\beta s}|f_{0}(s,\eta_{s})|^{2} dt<0, \ where \ f_{0}(s,x)=f(s,x,0,0,0,0). \ \ \ \ \ \ \ \ \ \ \ \  \ \ \ \ \ \ \ \ \ \ \ \ \ \ \ \ \ \ \ \
%\end{equation*}
\end{itemize}
\begin{lemma}\label{35}
Suppose $g$ is a given differentiable function with polynomial growth, and $f(t,x)$ is a $C_{pol}^{0,1}$-continuous function.
Then BSDE
\begin{equation}\label{33}
    Y_{t}=g(\eta_{T}) + \int_t^T f(s,\eta_{s}) ds - \int_t^T Z_{s} dB_{s}^{H},
\end{equation}
admits a unique solution
 $(Y_{\cdot},Z_{\cdot}) \in \widetilde{\mathcal{V}}_{[0,T]} \times \widetilde{\mathcal{V}}^{H}_{[0,T]}$,
and the following estimate holds,
\begin{align}\label{34}
    & \mathbb{E}\left(e^{\beta t}|Y_{t}|^{2} + \frac{\beta}{2}\int_t^T e^{\beta s}|Y_{s}|^{2} ds
    + \frac{2}{M}\int_t^Te^{\beta s}s^{2H-1}|Z_{s}|^{2} ds\right) \nonumber\\
\leq& \mathbb{E}\left(e^{\beta T}|g(\eta_{T})|^{2} + \frac{2}{\beta}\int_t^T e^{\beta s}|f(s,\eta_{s})|^{2} ds \right).
\end{align}
where $M>0$ is a suitable constant and $\beta> 0$.
\end{lemma}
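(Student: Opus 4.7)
The strategy is to reduce (\ref{33}) to a deterministic linear parabolic PDE via the quasi-linear ansatz $Y_t=u(t,\eta_t)$, $Z_t=\sigma_t u_x(t,\eta_t)$. Applying the It\^{o} formula (Proposition~2.2) to $u(t,\eta_t)$, using $\frac{d}{ds}\|\sigma\|_s^2=2\hat{\sigma}_s\sigma_s$, and matching the $dt$ and $dB^H_t$ parts with (\ref{33}) forces $u$ to satisfy the backward Cauchy problem
\begin{equation*}
\frac{\partial u}{\partial t}(t,x)+b_t\frac{\partial u}{\partial x}(t,x)+\hat{\sigma}_t\sigma_t\frac{\partial^2 u}{\partial x^2}(t,x)+f(t,x)=0,\qquad u(T,x)=g(x).
\end{equation*}
Since $b$ and $\sigma$ are deterministic and differentiable with $\hat{\sigma}_t\sigma_t>0$ on $(0,T]$, and $g$, $f$ are of polynomial growth with the required regularity, this linear PDE admits a solution $u\in C^{1,3}_{pol}([0,T]\times\mathbb{R})$, representable via a Feynman--Kac formula along an auxiliary Gaussian diffusion with coefficients $b,\sigma$.

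Once $u$ is in hand, I would set $Y_t:=u(t,\eta_t)$ and $Z_t:=\sigma_t u_x(t,\eta_t)$. The same It\^{o} formula then yields $dY_t=-f(t,\eta_t)\,dt+Z_t\,dB_t^H$, while the polynomial growth of $u$ and $u_x$ combined with the Gaussian moments of $\eta_t$ places $(Y,Z)$ in $\widetilde{\mathcal{V}}_{[0,T]}\times\widetilde{\mathcal{V}}^H_{[0,T]}$, giving existence.

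For the estimate (\ref{34}), the plan is to apply the product rule of Proposition~\ref{224} to $Y_t\cdot Y_t$, equivalently It\^{o}'s formula to $e^{\beta t}Y_t^2$. This produces a cross term $2\int_0^t \mathbb{D}_s^HY_s\cdot Z_s\,ds$, and from the explicit form one has $D_r^HY_s=u_x(s,\eta_s)\sigma_r$ for $r\le s$, so $\mathbb{D}_s^HY_s=\hat{\sigma}_s u_x(s,\eta_s)=(\hat{\sigma}_s/\sigma_s)Z_s$. Integrating on $[t,T]$, taking expectation to kill $\int e^{\beta s}Y_sZ_s\,dB_s^H$ via Proposition~\ref{2}, and dominating $-2Y_sf(s,\eta_s)$ by Young's inequality $2|Y_sf|\le\frac{\beta}{2}Y_s^2+\frac{2}{\beta}f^2$, delivers (\ref{34}), provided $\hat{\sigma}_s/\sigma_s\ge s^{2H-1}/M$ for a suitable $M>0$; this last bound follows from $\phi(x)=H(2H-1)|x|^{2H-2}$ together with continuity and non-vanishing of $\sigma$.

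Uniqueness is then immediate: if $(Y^1,Z^1)$ and $(Y^2,Z^2)$ both solve (\ref{33}), their difference satisfies the same BSDE with zero terminal and zero driver, and (\ref{34}) forces both components to vanish. The main obstacle I anticipate is the construction of the PDE solution in $C^{1,3}_{pol}$ — the diffusion coefficient $\hat{\sigma}_t\sigma_t$ degenerates at $t=0$, which is also why the weight $s^{2H-1}$ appears in the $Z$-norm — together with the Malliavin-regularity verifications required to apply Propositions~\ref{2} and~\ref{224} to the pair $(Y,Z)$ built from $u$.
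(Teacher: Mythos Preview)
Your proposal is correct and the estimate derivation is identical to the paper's: It\^{o} formula applied to $e^{\beta t}Y_t^2$, the identity $\mathbb{D}_s^HY_s=(\hat\sigma_s/\sigma_s)Z_s$, the two-sided bound $t^{2H-1}/M\le \hat\sigma_t/\sigma_t\le Mt^{2H-1}$, and Young's inequality with weight $\beta/2$. The only difference is that the paper does not reconstruct existence and uniqueness from the PDE ansatz as you do, but simply invokes Theorem~4 of Borkowska~\cite{Borkowska}; your construction is precisely the Hu--Peng mechanism underlying that cited result, so the two routes coincide in substance.
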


\begin{proof}
From Theorem 4 of Borkowska \cite{Borkowska}, we know that Eq. (\ref{33}) has a unique solution
$(Y_{\cdot},Z_{\cdot}) \in \widetilde{\mathcal{V}}_{[0,T]} \times \widetilde{\mathcal{V}}_{[0,T]}^{H}$.
By the It\^{o} formula, we obtain
\begin{equation*}
  \begin{split}
   e^{\beta t}Y_{t}^{2}=& e^{\beta T}g(\eta_{T})^{2} - \beta \int_t^T e^{\beta s}Y_{s}^{2} ds + 2\int_t^T e^{\beta s}Y_{s}f(s,\eta_{s}) ds\\
                        & -2\int_t^T e^{\beta s}Y_{s}Z_{s} dB_{s}^{H} -2\int_t^T e^{\beta s}\mathbb{D}_{s}^{H}Y_{s}Z_{s} ds.
  \end{split}
\end{equation*}
It is known (see example Hu-Peng \cite{Hu}) that $\mathbb{D}_{s}^{H} Y_{s} = \frac{\hat{\sigma}_{s}}{\sigma_{s}}Z_{s}$.
Moreover by Remark 6 in Maticiuc-Nie \cite{Maticiuc}, there exists $M>0$ such that for all $t\in [0,T]$,
$\frac{t^{2H-1}}{M}\leq \frac{\hat{\sigma}_{t}}{\sigma_{t}}\leq M t^{2H-1}$.
Thus we have
\begin{align}\label{36}
    & \mathbb{E}\left(e^{\beta t}Y_{t}^{2} + \beta \int_t^T e^{\beta s}Y_{s}^{2} ds+ \frac{2}{M}\int_t^T e^{\beta s}s^{2H-1}Z_{s}^{2} ds\right) \nonumber\\
\leq& \mathbb{E}\left(e^{\beta T}g(\eta_{T})^{2} + 2\int_t^T e^{\beta s}Y_{s}f(s,\eta_{s}) ds\right) \\
\leq& \mathbb{E}\left(e^{\beta T}g(\eta_{T})^{2} + \frac{\beta}{2}\int_t^T e^{\beta s}Y_{s}^{2} ds
    +  \frac{2}{\beta}\int_t^T e^{\beta s}|f(s,\eta_{s})|^{2} ds \right). \nonumber
\end{align}
Then we obtain the estimate (\ref{34}).
\end{proof}

The following theorem is the main result of this section: an existence and uniqueness theorem for anticipative BSDEs with respect to the fBm.
\begin{theorem}\label{30}
Let (H1) and (H2) hold, and $\delta, \zeta$ satisfy (i) and (ii).
Then the anticipative BSDE (\ref{31}) admits a unique solution
 $(Y_{\cdot},Z_{\cdot})\in\widetilde{\mathcal{V}}_{[0,T+K]} \times \widetilde{\mathcal{V}}_{[0,T+K]}^{H}$.
 Moreover, for all $t\in[0,T]$,
\begin{equation}\label{20}
      \mathbb{E}\left( e^{\beta t}|Y_{t}|^{2} + \int_t^T e^{\beta s}s^{2H-1}|Z_{s}|^{2} ds\right)
 \leq R\Theta(t,T,K),
\end{equation}
where $R$ is a positive constant which may be different from line to line, and
\begin{equation*}
  \Theta(t,T,K)= \mathbb{E}\bigg(e^{\beta T}|g(\eta_{T})|^{2}  + \int_t^T e^{\beta s}|f_{0}(s,\eta_{s})|^{2} ds
                + \int_T^{T+K} e^{\beta s}\big(|g(\eta_{s})|^{2}+s^{2H-1}|h(\eta_{s})|^{2}\big) ds \bigg).
\end{equation*}
\end{theorem}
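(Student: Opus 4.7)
The plan is to construct the solution by a Picard iteration built on Lemma \ref{35}, the novelty compared with the non-anticipative case being the systematic use of hypothesis (ii) to absorb the advanced arguments $Y_{s+\delta(s)}$ and $Z_{s+\zeta(s)}$ into the $\beta$-weighted norm on $[0,T+K]$. Estimate (\ref{20}) will then follow from a single application of (\ref{34}) to the solution itself.

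First I would set $(Y^{0},Z^{0}):=(g(\eta_{\cdot\wedge T}),h(\eta_{\cdot\wedge T}))$ and, given $(Y^{n},Z^{n})\in\widetilde{\mathcal{V}}_{[0,T+K]}\times\widetilde{\mathcal{V}}_{[0,T+K]}^{H}$, define the (now non-anticipative) random driver
\begin{equation*}
F^{n}(s):=f\bigl(s,\eta_{s},Y^{n}_{s},Z^{n}_{s},Y^{n}_{s+\delta(s)},Z^{n}_{s+\zeta(s)}\bigr),\qquad s\in[0,T].
\end{equation*}
Lemma \ref{35} applied to $F^{n}$ (which lies in the admissible class by (H2) and the induction hypothesis) produces $(Y^{n+1},Z^{n+1})$ on $[0,T]$; I would then extend by $Y^{n+1}_{t}=g(\eta_{t})$, $Z^{n+1}_{t}=h(\eta_{t})$ on $[T,T+K]$. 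Since every iterate agrees with $(g(\eta),h(\eta))$ on $[T,T+K]$, the differences $\widetilde{Y}^{n}:=Y^{n+1}-Y^{n}$ and $\widetilde{Z}^{n}:=Z^{n+1}-Z^{n}$ vanish identically there.

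Next I would feed these differences into (\ref{34}) with zero terminal data and driver $F^{n}-F^{n-1}$. The Lipschitz condition (H2), combined with Jensen's inequality for the conditional expectation, yields
\begin{equation*}
|F^{n}-F^{n-1}|^{2}\leq C\Bigl(|\widetilde{Y}^{n-1}_{s}|^{2}+|\widetilde{Z}^{n-1}_{s}|^{2}+\mathbb{E}^{\mathcal{F}_{s}}\bigl[|\widetilde{Y}^{n-1}_{s+\delta(s)}|^{2}+|\widetilde{Z}^{n-1}_{s+\zeta(s)}|^{2}\bigr]\Bigr).
\end{equation*}
To re-route the shifted arguments I would use the pointwise bounds $e^{\beta s}\le e^{\beta(s+\delta(s))}$ and $s^{2H-1}e^{\beta s}\le(s+\zeta(s))^{2H-1}e^{\beta(s+\zeta(s))}$, valid since $\delta,\zeta\geq 0$ and $H>\tfrac12$, and then apply hypothesis (ii) to the integrands $e^{\beta u}\mathbb{E}|\widetilde{Y}^{n-1}_{u}|^{2}$ and $u^{2H-1}e^{\beta u}\mathbb{E}|\widetilde{Z}^{n-1}_{u}|^{2}$; because these vanish on $[T,T+K]$, the resulting bound is a constant times the $\beta$-weighted norm of $(\widetilde{Y}^{n-1},\widetilde{Z}^{n-1})$. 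Choosing $\beta$ sufficiently large turns the Picard map into a strict contraction on $\widetilde{\mathcal{V}}_{[0,T+K]}\times\widetilde{\mathcal{V}}_{[0,T+K]}^{H}$, whose unique fixed point is the sought solution; uniqueness on the full space follows from the same computation applied to the difference of two candidate solutions.

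Finally, for the a priori estimate (\ref{20}) I would apply (\ref{34}) directly to $(Y,Z)$ with $f_{0}(s,\eta_{s}):=f(s,\eta_{s},0,0,0,0)$, bound $|f|^{2}$ via (H2) by $C(|f_{0}|^{2}+|Y_{s}|^{2}+|Z_{s}|^{2}+\mathbb{E}^{\mathcal{F}_{s}}[|Y_{s+\delta(s)}|^{2}+|Z_{s+\zeta(s)}|^{2}])$, and re-route the anticipative terms by the mechanism just described. This time the integrands do not vanish on $[T,T+K]$, so the resulting integrals split into a $[t,T]$ piece that is absorbed on the left-hand side by $\beta$-largeness, plus a $[T,T+K]$ piece that contributes precisely the $g(\eta_{s})$ and $h(\eta_{s})$ terms appearing in $\Theta(t,T,K)$. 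The main obstacle I anticipate is the bookkeeping for the weight $s^{2H-1}$ under the shift $s\mapsto s+\zeta(s)$: the elementary inequality above resolves it, but without that observation condition (ii) cannot be applied directly to the anticipative $Z$-integral, and the contraction argument breaks down.
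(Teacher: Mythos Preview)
Your handling of the anticipative terms through the pointwise inequalities $e^{\beta s}\le e^{\beta(s+\delta(s))}$, $s^{2H-1}e^{\beta s}\le (s+\zeta(s))^{2H-1}e^{\beta(s+\zeta(s))}$ together with hypothesis (ii) is correct and is exactly what the paper does. The gap lies in the \emph{present-time} $Z$-term, and it breaks both your contraction argument and your a priori estimate. Feeding $F^{n}-F^{n-1}$ into (\ref{34}) produces on the right a contribution of order $\tfrac{C}{\beta}\int_{0}^{T} e^{\beta s}|\widetilde Z^{n-1}_{s}|^{2}\,ds$, coming from the unweighted $|z-z'|$ in (H2). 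But the left side of (\ref{34}) and the norm on $\widetilde{\mathcal V}^{H}_{[0,T+K]}$ only control $\int_{0}^{T} s^{2H-1}e^{\beta s}|\widetilde Z^{n-1}_{s}|^{2}\,ds$. Since $s^{2H-1}\to 0$ as $s\downarrow 0$, the missing factor $s^{-(2H-1)}$ is unbounded near the origin, and no choice of $\beta$ makes the map contractive on all of $[0,T]$; the same obstruction prevents ``absorption by $\beta$-largeness'' in the estimate for $(Y,Z)$ itself. This weight mismatch is precisely the difficulty that distinguishes the fractional case $H>\tfrac12$ from the classical one.

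The paper resolves it by two different devices. For existence and uniqueness it does \emph{not} enlarge $\beta$; instead it partitions $[0,T]$ into short subintervals $[t_{i},t_{i+1}]$ and, following Maticiuc--Nie, passes through the square-root level (their Lemma~20) and a Cauchy--Schwarz step so that the contraction constant involves $\int_{t_{i}}^{t_{i+1}} s^{-(2H-1)}\,ds=\dfrac{t_{i+1}^{2-2H}-t_{i}^{2-2H}}{2-2H}$, which is finite even at the origin and can be made small by refining the partition; the solution is then patched together interval by interval. For the a priori estimate (\ref{20}) the paper uses a weighted Young inequality $2C|Y_{s}||Z_{s}|\le \dfrac{C'}{s^{2H-1}}|Y_{s}|^{2}+\varepsilon\,s^{2H-1}|Z_{s}|^{2}$, so that the $Z$-term acquires the correct weight and is absorbed on the left; the resulting $s^{-(2H-1)}|Y_{s}|^{2}$ term is then handled by Gronwall's inequality, which is legitimate because $s^{-(2H-1)}$ is integrable on $[0,T]$.
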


\begin{proof}
The method used here is similar to that in the proof of Proposition 19 in Maticiuc-Nie \cite{Maticiuc}.
For any given $(y_{t},z_{t}) \in \widetilde{\mathcal{V}}_{[0,T+K]} \times \widetilde{\mathcal{V}}^{H}_{[0,T+K]}$, we consider the following BSDE:
\begin{equation}\label{32}
  \begin{cases}
    -dY_{t}=f(t,\eta_{t},y_{t},z_{t},y_{t+\delta(t)},z_{t+\zeta(t)}) dt - Z_{t} dB_{t}^{H}, \ \ \ t\in [0,T]; \\
      Y_{t}=g(\eta_{t}), \ \  Z_{t}=h(\eta_{t}), \ \ \ \ \ \ \ \ \ \ \ \ \ \ \ \ \ \ \ \ \ \ \ \ \ \ \ \ t\in[T,T+K].
  \end{cases}
\end{equation}
From Lemma \ref{35}, we know (\ref{32}) has a unique solution
$(Y_{\cdot},Z_{\cdot}) \in \widetilde{\mathcal{V}}_{T+K} \times \widetilde{\mathcal{V}}^{H}_{T+K}$.
Define a mapping
$I:\widetilde{\mathcal{V}}_{[0,T+K]} \times \widetilde{\mathcal{V}}^{H}_{[0,T+K]}\longrightarrow
\widetilde{\mathcal{V}}_{[0,T+K]} \times \widetilde{\mathcal{V}}^{H}_{[0,T+K]}$
such that $I[(y_{\cdot},z_{\cdot})]=(Y_{\cdot},Z_{\cdot})$.
Note that since $Y_{t}=g(\eta_{t})$ and $Z_{t}=h(\eta_{t})$ are given when $t\in[T,T+K]$,
we essentially need to prove (\ref{31}) has a unique solution on $[0,T]$.
Let $n\in \mathbb{N}$ and $t_{i}=\frac{i-1}{n}T, i=1,...,n+1$.
First we will solve (\ref{31}) on $[t_{n},T]$.
In order to do this, we show $I$ is a contraction on $\widetilde{\mathcal{V}}_{[t_{n},T+K]} \times \widetilde{\mathcal{V}}^{H}_{[t_{n},T+K]}$.

For two arbitrary elements $(y_{\cdot},z_{\cdot})$ and
 $(y'_{\cdot},z'_{\cdot})\in \widetilde{\mathcal{V}}_{[t_{n},T+K]} \times \widetilde{\mathcal{V}}^{H}_{[t_{n},T+K]}$,
set $(Y_{\cdot},Z_{\cdot})=I[(y_{\cdot},z_{\cdot})]$ and $(Y'_{\cdot},Z'_{\cdot})=I[(y'_{\cdot},z'_{\cdot})]$.
We denote their differences by
\begin{equation*}
    (\hat{y}_{\cdot},\hat{z}_{\cdot})=((y_{\cdot}-y'_{\cdot}),(z_{\cdot}-z'_{\cdot})), \ \ \
  (\hat{Y}_{\cdot},\hat{Z}_{\cdot})=((Y_{\cdot}-Y'_{\cdot}),(Z_{\cdot}-Z'_{\cdot})).
\end{equation*}
From the It\^{o} formula, for $t\in [t_{n},T]$, similarly as (\ref{36}),
\begin{equation*}
\begin{split}
    &  \mathbb{E}\left(e^{\beta t}\hat{Y}_{t}^{2} + \beta \int_t^T e^{\beta s}\hat{Y}_{s}^{2} ds
     + \frac{2}{M}\int_t^T e^{\beta s}s^{2H-1}\hat{Z}_{s}^{2} ds\right) \\
\leq&  2 \mathbb{E}\int_t^T e^{\beta s}
      \hat{Y}_{s}\big[f(s,\eta_{s},y_{s},z_{s},y_{s+\delta(s)},z_{s+\zeta(s)}) - f(s,\eta_{s},y'_{s},z'_{s},y'_{s+\delta(s)},z'_{s+\zeta(s)})\big] ds.
\end{split}
\end{equation*}
Choose $\beta>1$ and $M>2$. Then from the assumption (H2) and the Schwarz inequality one has
\begin{align}
    &  \mathbb{E}\left(e^{\beta t}|\hat{Y}_{t}|^{2} + \int_t^T e^{\beta s}|\hat{Y}_{s}|^{2} ds
       + \frac{2}{M}\int_t^T e^{\beta s}s^{2H-1}|\hat{Z}_{s}|^{2} ds\right) \nonumber\\
\leq&  2C\int_t^Te^{\beta s}\mathbb{E}\big[|\hat{Y}_{s}| ( |\hat{y}_{s}| + |\hat{z}_{s}|)\big] ds
       +2C\int_t^T e^{\beta s} \mathbb{E}\big[|\hat{Y}_{s}| ( |\hat{y}_{s+\delta(s)}| + |\hat{z}_{s+\zeta(s)}|)\big] ds \label{37}\\
\leq&  2C\int_t^T (e^{\beta s} \mathbb{E}|\hat{Y}_{s}|^{2})^{\frac{1}{2}} \bigg(\big[e^{\beta s} \mathbb{E}(|\hat{y}_{s}| + |\hat{z}_{s}|)^{2}\big]^{\frac{1}{2}}
       + \big[e^{\beta s} \mathbb{E}(|\hat{y}_{s+\delta(s)}| + |\hat{z}_{s+\zeta(s)}|)^{2}\big]^{\frac{1}{2}} \bigg) ds \label{38}.
\end{align}
Denote $x(t)=(e^{\beta t} \mathbb{E}|\hat{Y}_{t}|^{2})^{\frac{1}{2}}$. Then from (\ref{38}),
\begin{equation*}
  x(t)^{2}\leq  2C\int_t^T x(s) \bigg(\big[e^{\beta s} \mathbb{E}(|\hat{y}_{s}| + |\hat{z}_{s}|)^{2}\big]^{\frac{1}{2}}
       + \big[e^{\beta s} \mathbb{E}(|\hat{y}_{s+\delta(s)}| + |\hat{z}_{s+\zeta(s)}|)^{2}\big]^{\frac{1}{2}} \bigg) ds.
\end{equation*}
Applying Lemma 20 in Maticiuc-Nie \cite{Maticiuc} to the above inequality, it follows that
\begin{align}
  x(t)\leq& C\int_t^T \bigg(\big[e^{\beta s} \mathbb{E}(|\hat{y}_{s}| + |\hat{z}_{s}|)^{2}\big]^{\frac{1}{2}}
       + \big[e^{\beta s} \mathbb{E}(|\hat{y}_{s+\delta(s)}| + |\hat{z}_{s+\zeta(s)}|)^{2}\big]^{\frac{1}{2}} \bigg) ds \nonumber \\
      \leq& \sqrt{2}C \int_t^T \big[e^{\beta s} \mathbb{E}(|\hat{y}_{s}|^{2} + |\hat{z}_{s}|^{2})\big]^{\frac{1}{2}} ds
       + \sqrt{2}C \int_t^T \big[e^{\beta s} \mathbb{E}(|\hat{y}_{s+\delta(s)}|^{2} + |\hat{z}_{s+\zeta(s)}|^{2})\big]^{\frac{1}{2}} ds. \nonumber
\end{align}
Therefore for $t\in [t_{n},T]$,
\begin{align*}
  x(t)^{2}\leq 4C^{2} \bigg(\int_t^T \big[e^{\beta s} \mathbb{E}(|\hat{y}_{s}|^{2} + |\hat{z}_{s}|^{2})\big]^{\frac{1}{2}} ds\bigg)^{2}
       + 4C^{2} \bigg(\int_t^T \big[e^{\beta s} \mathbb{E}(|\hat{y}_{s+\delta(s)}|^{2} + |\hat{z}_{s+\zeta(s)}|^{2})\big]^{\frac{1}{2}} ds \bigg)^{2}.
\end{align*}
Now we compute
\begin{align}
  \int_{t_{n}}^T x(s)^{2} ds
  \leq& 4C^{2}(T-t_{n}) \bigg(\int_{t_{n}}^T \big[e^{\beta s} \mathbb{E}(|\hat{y}_{s}|^{2} + |\hat{z}_{s}|^{2})\big]^{\frac{1}{2}} ds\bigg)^{2} \nonumber\\
      &+4C^{2}(T-t_{n})\bigg(\int_{t_{n}}^T \big[e^{\beta s} \mathbb{E}(|\hat{y}_{s+\delta(s)}|^{2}
       +|\hat{z}_{s+\zeta(s)}|^{2})\big]^{\frac{1}{2}}ds\bigg)^{2}. \nonumber\\
    =:& A_{1} + A_{2}.\label{39}
\end{align}
For  the term $A_{2}$ in (\ref{39}),
\begin{align}
      &\bigg(\int_{t_{n}}^T \big[e^{\beta s} \mathbb{E}(|\hat{y}_{s+\delta(s)}|^{2}+|\hat{z}_{s+\zeta(s)}|^{2})\big]^{\frac{1}{2}}ds\bigg)^{2} \nonumber \\
 \leq& \bigg(\int_{t_{n}}^T \big[e^{\beta s} \mathbb{E}|\hat{y}_{s+\delta(s)}|^{2} \big]^{\frac{1}{2}} ds
      +\int_{t_{n}}^T \big[e^{\beta s} \mathbb{E}|\hat{z}_{s+\zeta(s)}|^{2}\big]^{\frac{1}{2}}ds\bigg)^{2} \nonumber\\
 \leq& 2\bigg(\int_{t_{n}}^T \big[e^{\beta s} \mathbb{E}|\hat{y}_{s+\delta(s)}|^{2} \big]^{\frac{1}{2}} ds \bigg)^{2}
      +2\bigg(\int_{t_{n}}^T \big[\frac{1}{s^{2H-1}} \cdot e^{\beta s} s^{2H-1}\mathbb{E}|\hat{z}_{s+\zeta(s)}|^{2}\big]^{\frac{1}{2}}ds\bigg)^{2} \nonumber\\
 \leq& 2(T-t_{n})\int_{t_{n}}^Te^{\beta s} \mathbb{E}|\hat{y}_{s+\delta(s)}|^{2} ds
      +\frac{2(T^{2-2H}-t_{n}^{2-2H})}{2-2H} \int_{t_{n}}^Te^{\beta s} s^{2H-1}\mathbb{E}|\hat{z}_{s+\zeta(s)}|^{2} ds \nonumber\\
 \leq& \big[2(T-t_{n})+ \frac{T^{2-2H}-t_{n}^{2-2H}}{1-H}\big]
      \mathbb{E} \int_{t_{n}}^T \big[e^{\beta (s+\delta(s))}|\hat{y}_{s+\delta(s)}|^{2}
       + e^{\beta (s+\zeta(s))}(s+\zeta(s))^{2H-1}|\hat{z}_{s+\zeta(s)}|^{2}\big] ds \nonumber\\
 \leq& \big[2(T-t_{n})+ \frac{T^{2-2H}-t_{n}^{2-2H}}{1-H}\big]L\cdot
      \mathbb{E} \int_{t_{n}}^{T+K} e^{\beta s}\big(|\hat{y}_{s}|^{2} + s^{2H-1}|\hat{z}_{s}|^{2}\big) ds.  \label{302}
\end{align}
Similarly, for  the term $A_{1}$ in (\ref{39}),
\begin{align}
      &\bigg(\int_{t_{n}}^T \big[e^{\beta s}\mathbb{E} (|\hat{y}_{s}|^{2} + |\hat{z}_{s}|^{2})\big]^{\frac{1}{2}} ds\bigg)^{2}\nonumber\\
 \leq& \big[2(T-t_{n})+ \frac{T^{2-2H}-t_{n}^{2-2H}}{1-H}\big]
      \mathbb{E} \int_{t_{n}}^{T} e^{\beta s}\big(|\hat{y}_{s}|^{2} + s^{2H-1}|\hat{z}_{s}|^{2}\big) ds\nonumber\\
 \leq& \big[2(T-t_{n})+ \frac{T^{2-2H}-t_{n}^{2-2H}}{1-H}\big]
      \mathbb{E} \int_{t_{n}}^{T+K} e^{\beta s}\big(|\hat{y}_{s}|^{2} + s^{2H-1}|\hat{z}_{s}|^{2}\big) ds.   \label{303}
\end{align}
Combining (\ref{39}-\ref{303}), it follows that
\begin{align}\label{304}
  \int_{t_{n}}^T x(s)^{2} ds
  \leq(T-t_{n})G\cdot \mathbb{E} \int_{t_{n}}^{T+K} e^{\beta s}\big(|\hat{y}_{s}|^{2} + s^{2H-1}|\hat{z}_{s}|^{2}\big) ds,
\end{align}
where $G=4C^{2} (L+1)\big[2(T-t_{n})+ \frac{T^{2-2H}-t_{n}^{2-2H}}{1-H}\big]$. And similarly,
\begin{align}\label{305}
  \int_{t_{n}}^T \frac{1}{s^{2H-1}}x(s)^{2} ds
  \leq G \frac{T^{2-2H}-t_{n}^{2-2H}}{2-2H} \mathbb{E} \int_{t_{n}}^{T+K} e^{\beta s}\big(|\hat{y}_{s}|^{2} + s^{2H-1}|\hat{z}_{s}|^{2}\big) ds.
\end{align}
Now from (\ref{37}),
\begin{align}
    &  \mathbb{E}\left( \int_{t_{n}}^T e^{\beta s}|\hat{Y}_{s}|^{2} ds
       + \frac{2}{M}\int_{t_{n}}^T e^{\beta s}s^{2H-1}|\hat{Z}_{s}|^{2} ds\right) \nonumber\\
\leq&  2C \mathbb{E}\int_{t_{n}}^T e^{\beta s} \bigg(\frac{1}{v}(1+\frac{1}{s^{2H-1}})|\hat{Y}_{s}|^{2}
        + v|\hat{y}_{s}|^{2} + vs^{2H-1}|\hat{z}_{s}|^{2}  \bigg) ds \nonumber \\
    &   + 2C \mathbb{E}\int_{t_{n}}^T e^{\beta s} \bigg(\frac{1}{v}\big(1+\frac{1}{s^{2H-1}}\big)|\hat{Y}_{s}|^{2}
        + v|\hat{y}_{s+\delta(s)}|^{2} + vs^{2H-1}|\hat{z}_{s+\zeta(s)}|^{2}  \bigg) ds \nonumber \\
\leq&  \frac{4C}{v} \mathbb{E}\int_{t_{n}}^T e^{\beta s}(1+\frac{1}{s^{2H-1}})|\hat{Y}_{s}|^{2} ds
        + 2C v \mathbb{E}\int_{t_{n}}^T e^{\beta s}\big(|\hat{y}_{s}|^{2} + s^{2H-1}|\hat{z}_{s}|^{2}\big) ds \nonumber \\
    &   + 2C v \mathbb{E}\int_{t_{n}}^T e^{\beta s} \big(|\hat{y}_{s+\delta(s)}|^{2} + s^{2H-1}|\hat{z}_{s+\zeta(s)}|^{2}  \big) ds \nonumber \\
\leq&  \frac{4C}{v} \mathbb{E}\int_{t_{n}}^T e^{\beta s}(1+\frac{1}{s^{2H-1}})|\hat{Y}_{s}|^{2} ds
        + 2C v(1+L) \mathbb{E}\int_{t_{n}}^{T+K} e^{\beta s}\big(|\hat{y}_{s}|^{2} + s^{2H-1}|\hat{z}_{s}|^{2}\big) ds, \nonumber
\end{align}
where $v>0$. Using the inequalities (\ref{304}) and (\ref{305}), and note that $M>2$, we obtain
\begin{align*}
  \mathbb{E}\left( \int_{t_{n}}^T e^{\beta s}|\hat{Y}_{s}|^{2} ds + \int_{t_{n}}^T e^{\beta s}s^{2H-1}|\hat{Z}_{s}|^{2} ds\right)
\leq \widetilde{G} \mathbb{E}\int_{t_{n}}^{T+K} e^{\beta s}\big(|\hat{y}_{s}|^{2} + s^{2H-1}|\hat{z}_{s}|^{2}\big) ds,
\end{align*}
or
\begin{align*}
  \mathbb{E} \int_{t_{n}}^{T+K} e^{\beta s}\big(|\hat{Y}_{s}|^{2} + s^{2H-1}|\hat{Z}_{s}|^{2}\big) ds
\leq \widetilde{G} \mathbb{E}\int_{t_{n}}^{T+K} e^{\beta s}\big(|\hat{y}_{s}|^{2} + s^{2H-1}|\hat{z}_{s}|^{2}\big) ds,
\end{align*}
where $\widetilde{G}=\frac{2CGM}{v}(T-t_{n}) + \frac{CGM}{v(1-H)}(T^{2-2H}-t_{n}^{2-2H}) + CM(1+L)v$.
Choosing $v$ such that $CM(1+L)v<\frac{1}{4}$,
and taking $n$ large enough such that
$$\frac{2CGM}{v}(T-t_{n})<\frac{1}{4}, \ \ \ \ \frac{CGM}{v(1-H)}(T^{2-2H}-t_{n}^{2-2H})<\frac{1}{4},$$
then
\begin{align*}
  \mathbb{E} \int_{t_{n}}^{T+K} e^{\beta s}\big(|\hat{Y}_{s}|^{2} + s^{2H-1}e^{\beta s}|\hat{Z}_{s}|^{2}\big) ds
\leq \frac{3}{4} \mathbb{E}\int_{t_{n}}^{T+K} e^{\beta s}\big(|\hat{y}_{s}|^{2} + s^{2H-1}|\hat{z}_{s}|^{2}\big) ds.
\end{align*}
Hence $I$ is a contraction on $\widetilde{\mathcal{V}}_{[t_{n},T+K]} \times \widetilde{\mathcal{V}}^{H}_{[t_{n},T+K]}$.
Arguing as in the proof of Theorem 22 in Maticiuc-Nie \cite{Maticiuc} we obtain that (\ref{31}) has a unique solution on $[t_{n},T]$.
The next step is to solve (\ref{31}) on  $[t_{n-1},t_{n}].$
In order to do this, one can show $I$ is a contraction on $\widetilde{\mathcal{V}}_{[t_{n-1},t_{n}+K]} \times \widetilde{\mathcal{V}}^{H}_{[t_{n-1},t_{n}+K]}$.
With the same arguments, repeating the above technique we obtain the anticipating BSDE (\ref{31}) admits a unique solution in
 $\widetilde{\mathcal{V}}_{[0,T+K]} \times \widetilde{\mathcal{V}}_{[0,T+K]}^{H}$.

Now we prove the estimate (\ref{20}).
Again from the It\^{o} formula, similarly as (\ref{36}),
\begin{align}
    &   \mathbb{E}\left(e^{\beta t}|Y_{t}|^{2} + \beta\int_t^T e^{\beta s}|Y_{s}|^{2} ds
      + \frac{2}{M}\int_t^T e^{\beta s}s^{2H-1}|Z_{s}|^{2} ds\right) \nonumber\\
\leq&   \mathbb{E}\left(e^{\beta T}|g(\eta_{T})|^{2}
      + 2\int_t^T e^{\beta s}Y_{s}f(s,\eta_{s},Y_{s},Z_{s},Y_{s+\delta(s)},Z_{s+\zeta(s)}) ds \right). \label{306}
\end{align}
By Lipschitz continuity of $f$,
\begin{align}
      & 2\mathbb{E}\int_t^T e^{\beta s} Y_{s} f(s,\eta_{s},Y_{s},Z_{s},Y_{s+\delta(s)},Z_{s+\zeta(s)}) ds \nonumber \\
  \leq& 2\mathbb{E}\int_t^T e^{\beta s}|Y_{s}| \bigg[C\big(|Y_{s}|+|Z_{s}|+|Y_{s+\delta(s)}|+|Z_{s+\zeta(s)}|\big)+|f_{0}(s,\eta_{s})|\bigg] ds \nonumber \\
  \leq&\mathbb{E}\int_t^T \bigg(2C+C^{2}+\frac{2C^{2}M}{s^{2H-1}}+\frac{2C^{2}ML}{s^{2H-1}}+1\bigg)e^{\beta s}|Y_{s}|^{2}ds
       + \frac{1}{2M}\mathbb{E}\int_t^T e^{\beta s}s^{2H-1}|Z_{s}|^{2}ds  \nonumber \\
      &+ \mathbb{E}\int_t^T e^{\beta s}|Y_{s+\delta(s)}|^{2}ds
       + \frac{1}{2ML}\mathbb{E}\int_t^T e^{\beta s}s^{2H-1}|Z_{s+\zeta(s)}|^{2}ds + \mathbb{E}\int_t^T e^{\beta s}|f_{0}(s,\eta_{s})|^{2}ds \nonumber \\
  \leq&\mathbb{E}\int_t^T \bigg(2C+C^{2}+\frac{2C^{2}M}{s^{2H-1}}+\frac{2C^{2}ML}{s^{2H-1}}+1+L\bigg)e^{\beta s}|Y_{s}|^{2}ds
           + \frac{1}{M}\mathbb{E}\int_t^T e^{\beta s}s^{2H-1}|Z_{s}|^{2}ds \nonumber \\
      &+L\mathbb{E}\int_T^{T+K} e^{\beta s}|g(\eta_{s})|^{2}ds + \frac{1}{2M}\mathbb{E}\int_T^{T+K} e^{\beta s}s^{2H-1}|h(\eta_{s})|^{2} dt
       + \mathbb{E}\int_t^T e^{\beta s}|f_{0}(s,\eta_{s})|^{2}ds. \label{307}
\end{align}
Combining (\ref{306}) and (\ref{307}),
\begin{align}
    &   \mathbb{E}\left(e^{\beta t}|Y_{t}|^{2} + \frac{1}{M}\int_t^T e^{\beta s}s^{2H-1}|Z_{s}|^{2} ds\right) \nonumber\\
\leq&  R\Theta(t,T,K) + \mathbb{E}\int_t^T \bigg(2C+C^{2}+L+1+\frac{2C^{2}M(L+1)}{s^{2H-1}}\bigg)e^{\beta s}|Y_{s}|^{2}ds. \label{308}
\end{align}
By Gronwall's inequality,
\begin{equation*}
   e^{\beta t}\mathbb{E}|Y_{t}|^{2} \leq R\Theta(t,T,K)\exp \bigg\{ (2C+C^{2}+L+1)(T-t) + 2C^{2}M(L+1)\frac{T^{2-2H}-t^{2-2H}}{2-2H} \bigg\}.
\end{equation*}
And by (\ref{308}) one also has
\begin{align*}
   \mathbb{E}\int_t^T e^{\beta s}s^{2H-1}|Z_{s}|^{2} ds \leq R\Theta(t,T,K).
\end{align*}
Hence the estimate (\ref{20}) is obtained.
This completes the proof.
\end{proof}

\section{Comparison Theorem}

In this section we study a comparison theorem for the anticipative BSDEs of the following form: for $i=1,2$,
\begin{equation}\label{41}
  \begin{cases}
    -dY_{t}^{i}=f^{i}(t,\eta_{t},Y^{i}_{t},Z^{i}_{t},Y^{i}_{t+\delta(t)}) dt - Z^{i}_{t} dB_{t}^{H}, \ \ \ t\in [0,T]; \\
      Y_{t}^{i}=g^{i}(\eta_{t}),  \ \ \ \ \ \ \ \ \ \ \ \ \ \ \ \ \ \ \ \ \ \ \ \ \ \ \ \ \ \ \ \ \ \ \ \ \ \ t\in[T,T+K].
  \end{cases}
\end{equation}
\begin{itemize}
\item[(H3)] There exists a constant $C\geq 0$, such that for all
$t\in [0,T], x,y,y',z \in \mathbb{R},$ $\theta_{\cdot},\theta'_{\cdot},\zeta_{\cdot} \in L_{\mathcal{F}}^2(t,T+K;\mathbb{R})$, $r\in[t,T+K]$, we have
\begin{equation*}
      |f(t,x,y,z,\theta_{r}) - f(t,x,y',z',\theta'_{r})|^{2}
 \leq C\big(|y-y'|^{2} + t^{2H-1}|z-z'|^{2} + \mathbb{E}^{\mathcal{F}_{t}}[|\xi_{r}-\xi'_{r}|^{2}]\big).
\end{equation*}
\end{itemize}

\begin{remark}
It's easy to see that (H3) is stronger then the Lipschitz condition in (H2),
so under (H1) and (H3), Eq. (\ref{41}) admits a unique solution
 $(Y^{i}_{\cdot},Z^{i}_{\cdot}) \in \widetilde{\mathcal{V}}_{[0,T+K]} \times \widetilde{\mathcal{V}}^{H}_{[0,T]}$.
We use (H3) to replace (H2) here for the reason that (H3) is more convenient for the proof of the following result.
\end{remark}

\begin{theorem}\label{40}
Let $\delta$ and $\zeta$ satisfy (i)-(ii). For $i=1,2$, suppose $g^{i}$ satisfies (H1), and $f^{i}$ and $f^{i}_{y}$ satisfy (H3).
Let $\overline{f}=\overline{f}(t,x,y,z,\theta_{\cdot})$ such that $\overline{f}$ and  $\overline{f}_{y}$  satisfy (H3), and for all
$(t,x,y,z) \in[0,T] \times \mathbb{R}^{3}$,
$\overline{f}(t,x,y,z,\cdot)$ is increasing, i.e.,
$\overline{f}(t,x,y,z,\theta_{r})\leq \overline{f}(t,x,y,z,\theta'_{r})$,
if $\theta_{r}\leq \theta'_{r}$, $\theta_{r},\theta_{r}'\in L^{2}_{\mathcal{F}}(t,T+K;\mathbb{R})$, $r\in [t,T+K]$.
Moreover, for all $(t,x,y,z) \in[0,T] \times \mathbb{R}^{3}$, $ \theta_{r}\in  L^{2}_{\mathcal{F}}(t,T+K;\mathbb{R})$,
\begin{equation*}
 f_{1}(t,x,y,z,\theta_{r})\leq \overline{f}(t,x,y,z,\theta_{r}) \leq f_{2}(t,x,y,z,\theta_{r}).
\end{equation*}
Then, if $g^{1}(x)\leq g^{2}(x)$,  $x\in \mathbb{R}$, we have
$$Y_{t}^{1}\leq Y_{t}^{2}, \ a.e., \ a.s.$$
\end{theorem}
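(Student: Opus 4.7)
The plan is to treat $\Delta Y_t := Y^2_t - Y^1_t$, $\Delta Z_t := Z^2_t - Z^1_t$ as the solution of a linear anticipative fractional BSDE with a non-negative source term, and then to show non-negativity by a weighted It\^o estimate combined with Gronwall's lemma.

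First I would telescope the driver difference through $\bar f$:
\begin{align*}
& f^2(s,\eta_s,Y^2_s,Z^2_s,Y^2_{s+\delta(s)}) - f^1(s,\eta_s,Y^1_s,Z^1_s,Y^1_{s+\delta(s)}) \\
& = \bigl[f^2 - \bar f\bigr](s,\eta_s,Y^2_s,Z^2_s,Y^2_{s+\delta(s)}) + \bigl[\bar f - f^1\bigr](s,\eta_s,Y^1_s,Z^1_s,Y^1_{s+\delta(s)}) \\
& \quad + \bigl[\bar f(s,\eta_s,Y^2_s,Z^2_s,Y^2_{s+\delta(s)}) - \bar f(s,\eta_s,Y^1_s,Z^1_s,Y^1_{s+\delta(s)})\bigr].
\end{align*}
The first two brackets are pointwise non-negative by the sandwich hypothesis $f^1 \leq \bar f \leq f^2$, and collect into $F_s \geq 0$. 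The third bracket linearizes via the fundamental theorem of calculus applied to $\bar f_y$ together with (H3) for $\bar f$, yielding a representation $a_s \Delta Y_s + b_s \Delta Z_s + c_s \Delta Y_{s+\delta(s)}$, in which $a_s, b_s$ are bounded adapted (from the Lipschitz structure in $(y,z)$) and $c_s \geq 0$ is bounded (the sign from the monotonicity of $\bar f$ in $\theta$, the bound from the Lipschitz part of (H3)). Thus $\Delta Y$ solves the linear anticipative BSDE
\begin{equation*}
-d\Delta Y_s = \bigl[a_s\Delta Y_s + b_s\Delta Z_s + c_s\Delta Y_{s+\delta(s)} + F_s\bigr] ds - \Delta Z_s\, dB_s^H, \quad s\in[0,T],
\end{equation*}
with terminal data $\Delta Y_s = g^2(\eta_s) - g^1(\eta_s) \geq 0$ on $[T,T+K]$, and the task reduces to showing $\Delta Y_t \geq 0$ on $[0,T]$.

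Next, I would apply a smoothed fractional It\^o formula to $e^{\beta s}\Phi_\varepsilon(\Delta Y_s)$, where $\Phi_\varepsilon \in C^2$ is a convex approximation of $x \mapsto |x^-|^2$ with $\Phi_\varepsilon' \leq 0$ (vanishing on $[0,\infty)$) and $\Phi_\varepsilon'' \geq 0$. Extending Proposition \ref{224} to the square of $\Delta Y$ gives $d(\Delta Y_s)^2 = 2\Delta Y_s d\Delta Y_s + 2\mathbb{D}_s^H\Delta Y_s \cdot \Delta Z_s\, ds$, and combining with the identity $\mathbb{D}_s^H\Delta Y_s = (\hat\sigma_s / \sigma_s)\Delta Z_s$ and the bound $s^{2H-1}/M \leq \hat\sigma_s/\sigma_s \leq Ms^{2H-1}$ (Remark 6 of Maticiuc-Nie, already invoked in Lemma \ref{35}), then taking expectation so that Proposition \ref{2} kills the Skorokhod integral, produces an inequality in which: the term $\Phi_\varepsilon'(\Delta Y_s)F_s$ is $\leq 0$ and drops; the term $\Phi_\varepsilon'(\Delta Y_s)c_s\Delta Y_{s+\delta(s)}$ is bounded using $|\Phi_\varepsilon'|\leq 2(\Delta Y_s)^-$ and Young's inequality, the anticipative integral being returned to $[t,T+K]$ by assumption (ii), with the $[T,T+K]$ portion vanishing since $\Delta Y_s \geq 0$ there; the cross term $\Phi_\varepsilon'(\Delta Y_s)b_s\Delta Z_s$ is controlled by Schwarz with weight $s^{2H-1}$, and the resulting $s^{2H-1}|\Delta Z_s|^2$ contribution is absorbed into the $\Phi_\varepsilon''(\Delta Y_s) s^{2H-1}|\Delta Z_s|^2$ diffusion term by taking $\beta$ large. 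Passing $\varepsilon \downarrow 0$ yields
\begin{equation*}
e^{\beta t}\mathbb{E}|(\Delta Y_t)^-|^2 \leq C \int_t^T e^{\beta s}\mathbb{E}|(\Delta Y_s)^-|^2\, ds,
\end{equation*}
and Gronwall's inequality forces $(\Delta Y_t)^- = 0$ a.s., giving the claimed comparison.

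The hard part will be the fractional It\^o step with a non-smooth convex test function in the anticipative setting. Proposition \ref{224} is formulated only for product-type expressions of $\mathbb{D}^{1,2}$-integrands, so its extension to $\Phi_\varepsilon(\Delta Y_s)$ requires both Malliavin regularity of $(\Delta Y,\Delta Z)$ and the identity $\mathbb{D}_s^H \Delta Y_s = (\hat\sigma_s/\sigma_s)\Delta Z_s$ in the anticipative setting. The latter is not immediate because the shift $s+\delta(s)$ destroys the pure Markov dependence of $\Delta Y$ on $\eta$, so it would have to be derived by propagating Malliavin derivatives along the Picard iteration used in Theorem \ref{30}, or re-verified directly on the linearized equation. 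Balancing the $\Phi_\varepsilon''$ diffusion contribution against the $b_s\Delta Z_s$ cross term with a weight compatible with the $s^{2H-1}$ bound on $\hat\sigma_s/\sigma_s$ is the other delicate point, and is where the $s^{2H-1}$ factor built into (H3) and the freedom to choose $\beta$ arbitrarily large come in.
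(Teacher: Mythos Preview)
Your approach is genuinely different from the paper's. The paper does not linearize or run a negative-part estimate; instead it introduces an intermediate anticipative BSDE with driver $\bar f$ and a terminal function $\bar g$ sandwiched between $g^1$ and $g^2$, and builds a Picard-type sequence $(\widetilde Y_n,\widetilde Z_n)$ in which at each step the anticipative argument $\widetilde Y_{n-1}(\cdot+\delta(\cdot))$ is \emph{frozen} from the previous iterate. Each step is then an ordinary (non-anticipative) fractional BSDE, to which the comparison theorem of Hu--Ocone--Song applies directly; monotonicity of $\bar f$ in $\theta$ propagates $\widetilde Y_n \leq \widetilde Y_{n-1}$ down the chain. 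A Cauchy estimate based on Lemma \ref{35} and (H3) shows the sequence converges to the anticipative solution $\overline Y$, yielding $\overline Y \leq Y^2$, and a symmetric argument gives $Y^1 \leq \overline Y$.

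The trade-off is exactly where you located it. The paper's iterative reduction needs only the product-type It\^o formula of Proposition \ref{224} (as in Lemma \ref{35}) plus an existing comparison result for non-anticipative fractional BSDEs, so it never has to differentiate a convex test function along $\Delta Y$. Your direct Tanaka-type route is the standard one in the Brownian case and is conceptually sound here too --- your handling of the $c_s\Delta Y_{s+\delta(s)}$ term via the sign of $\Phi_\varepsilon'$, the positive/negative split, and assumption (ii) is correct --- but the fractional chain-rule step for $\Phi_\varepsilon(\Delta Y_s)$ with stochastic integrands is not furnished by Propositions \ref{2}--\ref{224}, and would have to be recovered either through the Markov structure $\Delta Y_s=\psi(s,\eta_s)$ encoded in $\widetilde{\mathcal V}$ (so that Proposition 2.2 applies to $\Phi_\varepsilon\circ\psi$) or via an independent chain rule for the divergence integral. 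You identify this gap honestly; the paper's approach is cleaner precisely because it sidesteps it.
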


\begin{proof}
Let $\overline{g}$ satisfies (H1) and
\begin{equation*}
  g^{1}(x)\leq \overline{g}(x)\leq g^{2}(x),  \ x\in \mathbb{R}.
\end{equation*}
Let $(\overline{Y}_{\cdot},\overline{Z}_{\cdot}) \in \widetilde{\mathcal{V}}_{[0,T+K]} \times \widetilde{\mathcal{V}}^{H}_{[0,T]}$ be the unique solution
of the following anticipative BSDE:
\begin{equation}\label{42}
  \begin{cases}
      \overline{Y}_{t}=\overline{g}(\eta_{T})+\int_t^T \overline{f}(s,\eta_{s},\overline{Y}_{s},\overline{Z}_{s},\overline{Y}_{s+\delta(s)}) ds
       - \int_t^T \overline{Z}_{s} dB_{s}^{H}, \  \ t\in [0,T]; \\
      \overline{Y}_{t}=\overline{g}(\eta_{t}),
       \ \ \ \ \ \ \ \ \ \ \ \ \ \ \ \ \ \ \ \ \ \ \ \ \ \ \ \ \ \ \ \ \ \ \ \ \ \ \ \ \ \ \ \ \ \ \ \ \ \ \ \ \ \ \ t\in[T,T+K].
  \end{cases}
\end{equation}
First we compare $\overline{Y}_{t}$ and $Y_{t}^{2}$.
Set $\widetilde{Y}_{0}(\cdot)=Y^{2}(\cdot)$ and consider the following BSDE:
\begin{equation*}
  \begin{cases}
      \widetilde{Y}_{1}(t)=\overline{g}(\eta_{T})
      +\int_t^T \overline{f}(s,\eta_{s},\widetilde{Y}_{1}(s),\widetilde{Z}_{1}(s),\widetilde{Y}_{0}(s+\delta(s))) ds
       - \int_t^T \widetilde{Z}_{1}(s) dB_{s}^{H}, \  t\in [0,T]; \\
      \widetilde{Y}_{1}(t)=\overline{g}(\eta_{t}),
        \ \ \ \ \ \ \ \ \ \ \ \ \ \ \ \ \ \ \ \ \ \ \ \ \ \ \ \ \ \ \ \ \ \ \ \ \ \ \ \ \ \ \ \ \ \ \ \ \ \ \ \ \ \ \ \ \ \ \ \ \ \ \ \ \ \ \ \ \ \ \
        t\in[T,T+K].
  \end{cases}
\end{equation*}
We see the above equation has a unique solution and denote it by $(\widetilde{Y}_{1}(\cdot),$ $\widetilde{Z}_{1}(\cdot))$.
Due to
\begin{equation*}
  \begin{cases}
   \overline{f}(s,x,y,z,\widetilde{Y}_{0}(s+\delta(s)))
   \leq f_{2}(s,x,y,z,\widetilde{Y}_{0}(s+\delta(s))), \ \ (s,x,y,z)\in [0,T]\times \mathbb{R}^{3};\\
   \overline{g}(x)\leq g^{2}(x), \ \ x\in \mathbb{R},
  \end{cases}
\end{equation*}
from Theorem 4.1 in Hu-Ocone-Song \cite{Hu1}, it follows that
\begin{equation*}
  \widetilde{Y}_{1}(t)\leq \widetilde{Y}_{0}(t)=Y^{2}(t), \  \ a.e., \ a.s.
\end{equation*}
Next, we consider the following BSDE:
\begin{equation*}
  \begin{cases}
      \widetilde{Y}_{2}(t)=\overline{g}(\eta_{T})
      +\int_t^T \overline{f}(s,\eta_{s},\widetilde{Y}_{2}(s),\widetilde{Z}_{2}(s),\widetilde{Y}_{1}(s+\delta(s))) ds
       - \int_t^T \widetilde{Z}_{2}(s) dB_{s}^{H}, \  t\in [0,T]; \\
      \widetilde{Y}_{2}(t)=\overline{g}(\eta_{t}),
     \ \ \ \ \ \ \ \ \ \ \ \ \ \ \ \ \ \ \ \ \ \ \ \ \ \ \ \ \ \ \ \ \ \ \ \ \ \ \ \ \ \ \ \ \ \ \ \ \ \ \ \ \ \ \ \ \ \ \ \ \ \ \ \ \ \ \ \ \ \ \
        t\in[T,T+K],
  \end{cases}
\end{equation*}
and let $(\widetilde{Y}_{2}(\cdot),\widetilde{Z}_{2}(\cdot)) \in \widetilde{\mathcal{V}}_{[0,T+K]} \times \widetilde{\mathcal{V}}^{H}_{[0,T]}$
be the unique solution of the above equation.
Now, since $\overline{f}(s,x,y,z,\cdot)$ is increasing,
 then for all $(s,x,y,z)\in [0,T]\times \mathbb{R}^{3}$,
\begin{equation*}
   \overline{f}(s,x,y,z,\widetilde{Y}_{1}(s+\delta(s)))
   \leq \overline{f}(s,x,y,z,\widetilde{Y}_{0}(s+\delta(s))).
\end{equation*}
Hence, similar to the above,
\begin{equation*}
  \widetilde{Y}_{2}(t)\leq \widetilde{Y}_{1}(t), \ \ a.e., \ a.s.
\end{equation*}
By induction, we can construct a sequence
$\{(\widetilde{Y}_{n}(\cdot),\widetilde{Z}_{n}(\cdot))\}_{n\geq 1} \subseteq \widetilde{\mathcal{V}}_{[0,T+K]} \times \widetilde{\mathcal{V}}^{H}_{[0,T]}$
such that
\begin{equation*}
  \begin{cases}
      \widetilde{Y}_{n}(t)=\overline{g}(\eta_{T})
      +\int_t^T \overline{f}(s,\eta_{s},\widetilde{Y}_{n}(s),\widetilde{Z}_{n}(s),\widetilde{Y}_{n-1}(s+\delta(s))) ds
       - \int_t^T \widetilde{Z}_{n}(s) dB_{s}^{H}, \ \ t\in [0,T]; \\
      \widetilde{Y}_{n}(t)=\overline{g}(\eta_{t}),     \ \ \ \ \ \ \ \ \ \ \ \ \ \ \ \ \ \ \ \ \ \ \ \ \ \ \ \ \ \ \ \ \ \ \ \ \ \ \ \ \ \ \ \ \ \ \ \ \ \ \ \ \ \ \ \ \ \ \ \ \ \ \ \ \ \ \ \ \ \ \ \ \ \ \ \  t\in[T,T+K].
  \end{cases}
\end{equation*}
Similarly, we obtain
\begin{equation*}
  Y^{2}(t)= \widetilde{Y}_{0}(t)\geq \widetilde{Y}_{1}(t)\geq \widetilde{Y}_{2}(t)\geq
   \cdots \geq \widetilde{Y}_{n}(t)\geq \cdots, \ a.e., \ a.s.
\end{equation*}
Next, we will show $\{(\widetilde{Y}_{n}(\cdot),\widetilde{Z}_{n}(\cdot))\}_{n\geq 1}$ is a Cauchy sequence. Set
$\hat{Y}_{n}(t)=\widetilde{Y}_{n}(t)-\widetilde{Y}_{n-1}(t),$ and $\hat{Z}_{n}(t)=\widetilde{Z}_{n}(t)-\widetilde{Z}_{n-1}(t), n\geq 4$.
Then by the estimate (\ref{34}) and (H3),
\[\begin{split}
    & \mathbb{E}\left(\frac{\beta}{2}\int_0^T e^{\beta s}|\hat{Y}_{n}(s)|^{2} ds +\frac{2}{M}\int_0^T s^{2H-1}e^{\beta s}|\hat{Z}_{n}(s)|^{2} ds\right)\\
\leq& \frac{2}{\beta}\mathbb{E}\bigg(\int_0^T e^{\beta s}
      | \overline{f}(s,\eta_{s},\widetilde{Y}_{n}(s),\widetilde{Z}_{n}(s),\widetilde{Y}_{n-1}(s+\delta(s)))\\
    & \ \ \ \ \ \ \ \ \ \ \ \ \ \ -\overline{f}(s,\eta_{s},\widetilde{Y}_{n-1}(s),\widetilde{Z}_{n-1}(s),\widetilde{Y}_{n-2}(s+\delta(s))) |^{2} ds\bigg)\\
\leq& \frac{2C}{\beta}\mathbb{E} \int_0^T e^{\beta s}\big(|\hat{Y}_{n}(s)|^{2}+s^{2H-1}|\hat{Z}_{n}(s)|^{2}\big) ds
     +\frac{2CL}{\beta}\mathbb{E} \int_0^T e^{\beta s}|\hat{Y}_{n-1}(s)|^{2} ds \\
\leq& \frac{2C(L+1)}{\beta}\mathbb{E} \int_0^T e^{\beta s}\big(|\hat{Y}_{n}(s)|^{2}+s^{2H-1}|\hat{Z}_{n}(s)|^{2}+|\hat{Y}_{n-1}(s)|^{2}\big) ds.
\end{split}\]
Let $\beta=8CM(L+1) + \frac{4}{M}$, then
\begin{equation*}
     \mathbb{E}\int_0^T e^{\beta s}\big(|\hat{Y}_{n}(s)|^{2} + s^{2H-1}|\hat{Z}_{n}(s)|^{2}\big) ds
\leq \frac{1}{4}\mathbb{E} \int_0^T e^{\beta s}\big(|\hat{Y}_{n}(s)|^{2}+s^{2H-1}|\hat{Z}_{n}(s)|^{2}+|\hat{Y}_{n-1}(s)|^{2}\big) ds.
\end{equation*}
Hence
\begin{equation*}
 \begin{split}
     \mathbb{E}\int_0^T e^{\beta s}\big(|\hat{Y}_{n}(s)|^{2} + s^{2H-1}|\hat{Z}_{n}(s)|^{2}\big) ds
\leq&  \frac{1}{3}\mathbb{E}\int_0^T e^{\beta s}|\hat{Y}_{n-1}(s)|^{2}  ds\\
\leq&  \frac{1}{3}\mathbb{E}\int_0^T e^{\beta s}\big(|\hat{Y}_{n-1}(s)|^{2} + s^{2H-1}|\hat{Z}_{n-1}(s)|^{2}\big) ds.
 \end{split}
\end{equation*}
So
\begin{equation*}
      \mathbb{E}\int_0^T e^{\beta s}(|\hat{Y}_{n}(s)|^{2} + s^{2H-1}|\hat{Z}_{n}(s)|^{2}) ds
\leq  (\frac{1}{3})^{n-4}\mathbb{E} \int_0^T e^{\beta s}(|\hat{Y}_{4}(s)|^{2} + s^{2H-1}|\hat{Z}_{4}(s)|^{2}) ds.
\end{equation*}
It follows that $(\hat{Y}_{n}(\cdot))_{n\geq 4}$ and $(\hat{Z}_{n}(\cdot))_{n\geq 4}$ are respectively Cauchy sequences in
$\widetilde{\mathcal{V}}_{[0,T+K]}$ and  $\widetilde{\mathcal{V}}_{[0,T]}^{H}$.
Denote their limits by $\widetilde{Y}_{\cdot}$ and $\widetilde{Z}_{\cdot}$, respectively.
%Since $\widetilde{\mathcal{V}}_{T+K}$ and $\widetilde{\mathcal{V}}^{H}_{T}$ are both Banach spaces, we obtain
%$(\widetilde{Y}_{\cdot},\widetilde{Z}_{\cdot})\in \widetilde{\mathcal{V}}_{T+K}\times \widetilde{\mathcal{V}}^{H}_{T}$, and
%$(\widetilde{Y}_{\cdot},\widetilde{Z}_{\cdot})$ satisfies the following anticipative BSDE:
%\begin{equation}\label{43}
%  \begin{cases}
%      \widetilde{Y}(t)=\overline{g}(\eta_{T})
%      +\int_t^T \overline{f}(s,\eta_{s},\widetilde{Y}(s),\widetilde{Z}(s),\widetilde{Y}(s+\delta(s))) ds
%       - \int_t^T \widetilde{Z}(s) dB_{s}^{H}, \  t\in [0,T]; \\
%      \widetilde{Y}(t)=\overline{g}(\eta_{t}),  \ \ \ \ \ \ \ \ \ \ \ \ \ \ \ \ \ \ \ \ \ \ \ \ \ \ \ \ \ \ \ \ \ \ \ \ \ \ \ \ \ \ \ \ \ \ \ \ \ \ \ \ \ \ \ \ \ \ \ \ \ \ \ \ \ \ \ t\in[T,T+K].
%  \end{cases}
%\end{equation}
%Comparing (\ref{42}) and (\ref{43}),
Now from Theorem \ref{30}, it follows that
\begin{equation*}
  \widetilde{Y}(t)= \overline{Y}(t), \ \ a.e., \ a.s.
\end{equation*}
Hence
\begin{equation*}
 \overline{Y}(t)\leq  Y^{2}(t), \ \ a.e., \ a.s.
\end{equation*}
Similarly, we can prove that
\begin{equation*}
Y^{1}(t)\leq \overline{Y}(t), \ \ a.e., \ a.s.
\end{equation*}
Therefore, the desired result is obtained.
\end{proof}

\section*{Acknowledgements}

We thank the referees for the detailed comments and suggestions to improve the paper significantly.

\end{document}